\newtheorem{theorem}{Theorem}
\newtheorem{proposition}[theorem]{Proposition}
\newtheorem{lemma}[theorem]{Lemma}
\newtheorem{cor}[theorem]{Corollary}
\newtheorem{remark}[theorem]{Remark}
\newtheorem{defin}[theorem]{Definition}
\renewcommand{\epsilon}{\varepsilon}
\renewcommand{\L}{\mathcal{L}}
\DeclareMathOperator{\fr}{fr}
\DeclareMathOperator{\rsp}{rsp}
\DeclareMathOperator{\smf}{sf}
\def\R{\mathbb{R}}
\begin{document}
	\title[Fractional susceptibility at Misiurewicz parameters]{Pre-threshold fractional susceptibility functions at Misiurewicz parameters.}
	
	\begin{abstract}
		We show that the response, frozen and semifreddo fractional susceptibility functions of certain real-analytic unimodal families, at Misiurewicz parameters and for fractional differentiation index $0\le\eta<\frac{1}{2}$, are holomorphic on a disk of radius greater than one. This is a step towards solving a Conjecture of Baladi and Smania, in the case of the aforementioned susceptibility functions.    
	\end{abstract}
	
	\author{Julien Sedro}
	\address{Laboratoire de Probabilit\'es, Statistique et Mod\' elisation (LPSM), Sorbonne Universit\' e, Universit\'e de Paris, 4
		Place Jussieu, 75005 Paris, France}
	\email{sedro@lpsm.paris}
	
	\maketitle
	\section{Introduction}

	We consider in this paper the \emph{quadratic family} $(f_t)_{t\in(1,2)}$, classically defined by
	\begin{equation}\label{def:quadfamily}
		f_t(x):=t-x^2
	\end{equation}
	on the interval $I_t=[-\beta_t,\beta_t]$ with $\beta_t=\frac{1+\sqrt{1+4t}}{2}\in(0,2)$. This map admits $c=0$ as its unique critical point; we will denote by $c_{k,t}=f_t^k(0),~k\ge 1$ the post-critical orbit.
	\par It is well-known since the work of Lyubich \cite{Lyu} that for almost every parameter $t\in (1,2)$, one of two behavior occurs: either there is an attracting periodic cycle (at so called \emph{regular} parameters; this concerns a dense subset $\mathcal R$ of $(1,2)$), or there exists an absolutely continuous invariant probability measure $d\mu_t=\rho_t dx$ for $f_t$ (at so called \emph{stochastic} parameters, which form a positive Lebesgue measure subset $\mathcal S$ of $(1,2)$).
	\smallskip
	
	In the setting of unimodal interval maps, Collet and Eckmann \cite{CE} famously introduced the following condition: the quadratic map $f_t$ satisfies the Collet-Eckmann condition if there is $\lambda_c>1$ and $N_c>0$ such that for any $k\ge N_c$
	\begin{equation}\label{CE}
		|(f_t^k)'(c_{1,t})|\ge\lambda_c^k,\tag{CE}
	\end{equation}
	i.e one requires the Lyapunov exponent computed along the post-critical orbit to be positive. This condition implies the existence of an absolutely continuous invariant measure, i.e parameters $t$ satisfying \eqref{CE} are in $\mathcal S$. From now on, we will call them $CE$ parameters.  
	\par Let $\mathcal{M}\subset CE$ be the set of Misiurewicz parameters, that is the set of parameters $t$ for which the critical point $c=0$ is not an accumulation point of the post-critical set. Let $ MT\subset \mathcal{M}$ be the set of Misiurewicz-Thurston parameters, for which the post-critical orbit is preperiodic and hyperbolic, i.e such that there exists $\ell,p\in\mathbb N$, for which $c_{\ell,t}=f_t^{\ell}(0)$ is periodic of period $p$, and $|Df_t^p(c_{\ell,t})|>1$.
	\\$\mathcal{M}$ has zero Lebesgue measure \cite{Sands}, and $MT$ is a countable subset of $CE$.
	\smallskip
	
	The question we want to study here is connected to fractional response of the invariant measure, that is the H\"older regularity of the map $t\mapsto\mu_t$, restricted to a suitable subset\footnote{In particular, regularity should be understood in the sense of Whitney \cite{Whitney}.} of $\mathcal S$, at a point $t\in\mathcal M$, where $\mu_t$ is seen as a Radon measure or a distribution of some finite order. The question of regularity of $t\mapsto\mu_t$ has already received a lot of attention, and partial answers. For the quadratic family \eqref{def:quadfamily}, Thunberg \cite[Cor. 1 and 2]{Thun01} showed that the map $t\mapsto\mu_t$ is discontinuous at every point in $\mathcal S$, and cannot be continuous on any full-measure subset of parameters. However, restricted to a suitable subset, this map is continuous: Tsujii \cite{Tsujii} showed that, restricted to a positive measure parameter subset $\mathcal S'\not=\mathcal S$, $t\mapsto\mu_t$ is weak-$\ast$ continuous at Misiurewicz points. Rychlik and Sorets \cite{RS} showed that for yet another positive measure subset $\mathcal S''$ of parameters, the invariant density $\rho_t\in L^p$ for $1\le p< 2$ and that $t\in \mathcal S''\mapsto \rho_t\in L^p$, $1\le p<2$ is continuous at Misiurewicz parameters, via a H\"older estimate.  
	\\More recently, and for more general smooth unimodal families, Baladi, Benedicks and Schnellmann \cite{BBS} proved that at almost every $CE$ parameter $t_0$, for any $\frac{1}{2}$-H\"older observable $\phi$, the map $R_\phi:t\mapsto\int\phi d\mu_t$ is $C^\eta$ at $t=t_0$ for any $\eta<\frac{1}{2}$, in the sense of Whitney, on a set of $CE$ parameters having $t_0$ as a density point. Furthermore, they show that at any mixing MT parameter, there exists $\phi\in C^\infty$, $C>1$, and $t_n\in MT$ with $t_n\to t$, such that
	\[C^{-1}|t-t_n|^{1/2}\leq\left|R_\phi(t_n)-R_\phi(t)\right|\leq C|t-t_n|^{1/2}.\]
	Interestingly, those results seem to contradict earlier ones obtained by Ruelle \cite{Ru05,RuJi05}, who suggested that, for the quadratic family, the map $R_\phi$ had a well-defined derivative at $t=t_0$, for $t_0$ a $MT$ parameter, which raised the hope that \emph{linear response} (i.e. differentiability of the map $t\mapsto\mu_t$) holds in this setting. Note that this hope was already diminished by a series of paper \cite{Bal07,BS08,SdL}, which exhibited  smooth families of piecewise expanding unimodal maps for which linear response fails, and highlighted a sufficient condition, called \emph{horizontality} of the perturbation, for linear response to hold. For a full account of this intricate story, we refer to the Introduction of \cite{ABLP20,BS20}.
	
	\par	
	In a recent preprint, Aspenberg, Baladi, Leppanen and Persson \cite{ABLP20} introduced several \emph{fractional susceptibility functions}, whose connection to fractional response is similar to the one between classical susceptibility function and linear response (i.e, the value at 1 of the (fractional) susceptibility function, if it is well-defined, is the (fractional) derivative of $R_\phi$ at $t=t_0$). We recall here their definitions: given $\phi\in L^\infty(I_{t_0})$, and denoting $\L_t$ the (Ruelle-Perron-Frobenius) transfer operator associated to $f_t$, we define, as formal power series
	\begin{itemize}
		\item The \emph{response} fractional susceptibility function\footnote{In \cite[Def 2.3]{BS20}, another definition of the response fractional susceptibility is given. We note that the two definitions coincide for $0\le\eta<1/2$ and $\phi\in C^1$ compactly supported (see also \cite[Lemma 5.2]{BS20})}:
		\begin{equation}\label{def:respfracsus}
			\Psi^{\rsp}_\phi(\eta,z):=-\sum_{j=0}^\infty z^j\int_{I_{t_0}}\phi\circ f^j_{t_0}(x)M^\eta[\rho_{t_0}](x)dx,
		\end{equation}
		where we denoted $M^\eta$ the two-sided \emph{Marchaud derivative}\footnote{Marchaud fractional derivatives were introduced in his PhD thesis \cite{Marchaud}; for a quick introduction to the subject, we refer to \cite{ferrari}} (see Definition \ref{def:Marchaudderiv} and \eqref{eq:twosidedMarchaud}).
		\item The \emph{frozen} fractional susceptibility function:
		\begin{equation}\label{def:frozenfracsus}
			\Psi^{\fr}_\phi(\eta,z):=\sum_{j=0}^\infty z^j\int_{I_{t_0}}\phi\circ f^j_{t_0}(x)M_t^\eta[\L_{t}\rho_{t_0}]_{t=t_0}dx,
		\end{equation}
		where we denoted $M_t^\eta$ the two-sided Marchaud derivative w.r.t the parameter $t$.
	\end{itemize}
	To define a fractional susceptibility function in the spirit of \cite{ABLP20}, more care is needed. Indeed, as the invariant density $\rho_t$ is not defined for every parameter $t$, one has to consider integrals over some positive measure subset $\Omega\subset\mathcal S$. Then one sets
	\begin{small}
		\begin{equation}\label{def:fracsus}
			\Psi^{\Omega}_\phi(\eta,z):=\frac{\eta}{2\Gamma(1-\eta)}\sum_{j=0}^\infty z^j\int_{-2}^2\int_{\Omega-t_0}\phi\circ f^j_{t+t_0}\frac{(\L_{t_0+t}-\L_{t_0})\rho_{t_0}}{|t|^{1+\eta}}\text{sgn}(t)dtdx.
		\end{equation}
	\end{small}
	As an intermediary object between the frozen and real fractional susceptibility, one may introduce the \emph{semifreddo} susceptibility function (see \cite[Section 7.2]{BS20}), defined by
	\begin{small}
		\begin{equation}\label{def:sffracsus}
			\Psi^{\smf}_\phi(\eta,z):=\frac{\eta}{2\Gamma(1-\eta)}\sum_{j=0}^\infty z^j\int_{I_{t_0}}\int_{\Omega-t_0}\phi\circ f^j_{t_0}\frac{(\L_{t_0+t}-\L_{t_0})\rho_{t_0}}{|t|^{1+\eta}}\text{sgn}(t)dtdx.
		\end{equation}
	\end{small}
	In the context of the quadratic family, Baladi and Smania \cite[Conjecture A]{BS20} formulate a set of conjectures for those fractional susceptibility functions associated with compactly supported $C^1$ observables $\phi$: holomorphy in $z$ in a disk of radius greater than one and fractional response formula\footnote{This property is only valid for the ``real" fractional susceptibility function \eqref{def:fracsus}} for $0<\eta<1/2$, existence of a certain decomposition at $\eta=\frac{1}{2}$, holomorphy in a disk of radius smaller than one for $\frac{1}{2}<\eta<1$. Note the qualitative change of behavior when $\eta$ passes the value $1/2$: this is the reason of our ``threshold" terminology. 
	\\In particular, \cite[Thm. C]{BS20} establishes the existence of some decomposition, related to the presence of poles on the unit circle,  for the response and frozen fractional susceptibilities at the threshold value $\eta=\frac{1}{2}$. 
	\medskip
	
	As a step towards proving those conjectures, we establish in Theorem \ref{thm:regfrozenfracsus}, for mixing Misiurewicz parameters, holomorphy of \eqref{def:respfracsus}, \eqref{def:frozenfracsus} and \eqref{def:sffracsus} in a disk of radius greater than one, for $0\le \eta<\frac{1}{2}$. In light of the previous discussion, this result is the best one can expect in this setting.  
	\medskip
	
	Given the global nature of the Marchaud fractional derivative and \eqref{def:frozenfracsus}, it will be useful to extend the range of parameters $t\in(1,2)$ of the quadratic family to $\R$, as follows: for a $t_0\in(1,2)$, fix a $t_{min}<0$ (resp. a $t_{max}>0$) such that $t_0+t_{min}>1$ (resp. $t_0+t_{max}<2$), and set $f_t:=f_{t_0+t_{min}}$ for $t\le t_0+t_{min}$ and $f_t:=f_{t_0+t_{\max}}$ for all $t\ge t_0+t_{max}$.
	\medskip
	
	\begin{remark}\label{rem:gensetting}
		We point out that our results are stated and proven in a more general setting than the quadratic family: let $(f_t)_{t\in[t_{min},t_{max}]}$ be a family of real-analytic, unimodal maps, with negative Schwarzian derivative, satisfying at a Misiurewicz parameter $t_0\in[t_{min},t_{max}]$:
		\begin{equation}\label{eq:gensetting}
			f_{t+t_0}= f_{t_0}+tX_{t_0}\circ f_{t_0},
		\end{equation} 
		with $X_{t_0}$ real-analytic. 
		Note that for the quadratic family, \eqref{eq:gensetting} holds with $X_{t_0}\equiv 1$. 
	\end{remark}
	\medskip
	
	\textbf{Acknowledgments}:
	The author was supported by the European Research Council (ERC) under the European Union's Horizon 2020 research and innovation programme (grant agreement No 787304). The author would also like to thank Viviane Baladi for suggesting the problem and many useful conversations, especially for the proof of Lemma \ref{lemme:decayofcorrel}.
	\section{Preliminaries}
	In this section, we recall useful definitions and results.
	We start with:
	\begin{defin}\label{def:fracint}
		Let $0<\eta<1$ and $1\le p<\frac{1}{\eta}$. For $g\in L^p(\mathbb R)$, we define its \emph{fractional integrals} $I_{\pm}^\eta[g]$ by:
		\begin{align}
			I_{-}^\eta[g](x)&:=\frac{1}{\Gamma(\eta)}\int_{x}^{+\infty}\frac{g(t)}{(x-t)^{1-\eta}}dt\\
			I_{+}^\eta[g](x)&:=\frac{1}{\Gamma(\eta)}\int_{-\infty}^{x}\frac{g(t)}{(t-x)^{1-\eta}}dt
		\end{align} 
	\end{defin}
	Parallel to those fractional integrals, we introduce the (Marchaud) fractional derivatives: 
	\begin{defin}\label{def:Marchaudderiv}
		Given $0<\eta<1$ and a bounded, $\alpha$-H\"older ($\alpha>\eta$) $g$ on the real line\footnote{The fractional derivative operators $M_{\pm}^\eta$ may be defined for a larger class of functions. For more details, we refer to \cite[Sec. 5.4]{Samko}.}, we define its (left and right) $\eta$-\emph{Marchaud fractional derivative} by
		\begin{align}
			M_{-}^\eta[g](x)&:=\frac{\eta}{\Gamma(1-\eta)}\int_x^{+\infty}\frac{g(x)-g(t)}{(t-x)^{1+\eta}}dt\\
			M_{+}^\eta[g](x)&:=\frac{\eta}{\Gamma(1-\eta)}\int_{-\infty}^x\frac{g(x)-g(t)}{(x-t)^{1+\eta}}dt.
		\end{align} 
		We may then define the \emph{two-sided Marchaud derivative} $M^\eta$ as
		\begin{equation}\label{eq:twosidedMarchaud}
			M^\eta[g]:=\frac{M_{+}^\eta[g]-M_{-}^\eta[g]}{2}
		\end{equation}
	\end{defin}

	As in the case of classical differentiation and integration, fractional differentiation is the left inverse of fractional integration (see \cite[Sec 5.4 and Thm 6.1]{Samko}).
	\\We also introduce fractional integrals on an interval $(a,b)$, $I^\eta_{a_+}[g]$ defined for $0<\eta<1$ and (say) bounded $g:(a,b)\to\R$ by
	\begin{align}\label{def:intervalfracint}
		I_{b_-}^\eta[g](x)&:=\frac{1}{\Gamma(\eta)}\int_{x}^{b}\frac{g(t)}{(x-t)^{1-\eta}}dt\\
		I^\eta_{a_+}[g](x)&:=\frac{1}{\Gamma(\eta)}\int_{a}^{x}\frac{g(t)}{(t-x)^{1-\eta}}dt
	\end{align}
	
	\begin{defin}
		Given $0\le s<1$ and $p>1$, denoting by $\mathcal F$ the Fourier transform, we consider $\phi\in C_c^\infty(\R)$ and set 
		\[\|\phi\|_{H^s_p}:=\|\mathcal F^{-1}\left((1+|\xi|^2)^{s/2}\mathcal F(\phi)\right)\|_{L^p}.\]
		We then define $H^s_p(\R):= \overline{C_c^\infty(\R)}^{\|.\|_{H^s_p}}$.
		\\Given an interval $I\subset\R$, we will also consider the space $H^s_p(I)$ of functions $f\in H^s_p(\R)$ that are supported in $I$.
	\end{defin}
	\begin{defin}
		Let $-\infty<a<b<+\infty$. We say that $f\in I^s_{a+}(L^p(a,b))$ (resp. $f\in I^s_{b-}(L^p(a,b))$) if there exists some $\phi\in L^p(a,b)$ such that $f=I^s_{a+}[\phi]$ (resp. $f=I^s_{b-}[\phi]$). 
		\\We define similarly $I^s_+(L^p(\R))$ and $I^s_-(L^p(\R))$.
	\end{defin}
	One has \cite[Cor to Thm 11.4 and 11.5]{Samko}, for $1<p<\frac{1}{s}$: \[I^s_{a+}(L^p(a,b))=I^s_{b-}(L^p(a,b))=:I^s(L^p(a,b))\] (resp. $I^s_+(L^p(\R))=I^s_-(L^p(\R))=:I^s(L^p(\R))$).
	\\We will make extensive use of the following fact: if a $L^p$ function is representable as the (left or right) $s$-fractional integral of some $\phi\in L^p$, $1<p<\frac{1}{s}$, then it belongs to $H^s_p$. More precisely (see \cite[Corollary to Thm 18.2]{Samko}):
	\begin{theorem}\label{thm:fracintfracsobolev}
		Let $0\le s<1$ and $1<p<\frac{1}{s}$. One has
		\begin{align*}
			&H^s_p(\R)= L^p(\R)\cap I^s(L^p(\R)).
		\end{align*} 
	\end{theorem}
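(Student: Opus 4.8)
The plan is to prove the two inclusions separately, working throughout on the Fourier side; the only genuine difficulty is the behaviour of the relevant Fourier multipliers near the origin. The case $s=0$ is trivial, since $H^0_p(\R)=L^p(\R)$ and $I^0=\Id$, so I would assume $0<s<1$. Recall that the one-sided Liouville fractional integrals $I_\pm^s$ act on the Fourier side by multiplication by $(\pm i\xi)^{-s}$ (principal branch: modulus $|\xi|^{-s}$, phase constant on each half-line), whereas $f\in H^s_p(\R)$ means exactly that $\mathcal F^{-1}\bigl((1+|\xi|^2)^{s/2}\mathcal F f\bigr)\in L^p(\R)$. Away from the origin these two symbols agree up to a bounded, slowly varying factor, but at the origin the Liouville symbols are singular, which is precisely why the intersection with $L^p(\R)$ must appear in the statement.

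For the inclusion $H^s_p(\R)\subseteq L^p(\R)\cap I^s(L^p(\R))$ I would argue as follows. Since $(1+|\xi|^2)^{-s/2}=\mathcal F G_s$ with $G_s\in L^1(\R)$ the Bessel kernel, Young's inequality gives $H^s_p(\R)\hookrightarrow L^p(\R)$. Given $f\in H^s_p(\R)$, write $\mathcal F f=(1+|\xi|^2)^{-s/2}\mathcal F h$ with $h\in L^p(\R)$, and set $g:=\mathcal F^{-1}\bigl(m\,\mathcal F h\bigr)$ with $m(\xi):=(\pm i\xi)^{s}(1+|\xi|^2)^{-s/2}$. The multiplier $m$ has modulus $\le 1$, tends to $0$ at the origin and to a unimodular constant on each half-line at infinity, and a direct computation gives $|m(\xi)|+|\xi\,m'(\xi)|\le C$ for all $\xi\neq 0$; by the Mihlin--H\"ormander theorem, $g\in L^p(\R)$. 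Since $1<p<1/s$, $I_\pm^s[g]$ is a genuine locally integrable function, and $\mathcal F\bigl(I_\pm^s[g]\bigr)=(\pm i\xi)^{-s}m(\xi)\mathcal F h=(1+|\xi|^2)^{-s/2}\mathcal F h=\mathcal F f$, so $f=I_\pm^s[g]\in I^s(L^p(\R))$.

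For the reverse inclusion $L^p(\R)\cap I^s(L^p(\R))\subseteq H^s_p(\R)$, the naive attempt of dividing $\mathcal F f$ by $(\pm i\xi)^{-s}$ fails because $(1+|\xi|^2)^{s/2}(\pm i\xi)^{-s}$ blows up like $|\xi|^{-s}$ near $0$; this is where the hypothesis $f\in L^p(\R)$ is genuinely needed. I would fix $\chi\in C_c^\infty(\R)$ with $\chi\equiv 1$ near $0$ and split $(1+|\xi|^2)^{s/2}=\chi(\xi)(1+|\xi|^2)^{s/2}+\bigl(1-\chi(\xi)\bigr)(1+|\xi|^2)^{s/2}$. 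The low-frequency symbol $\chi(\xi)(1+|\xi|^2)^{s/2}$ is smooth with compact support, hence its inverse Fourier transform is a Schwartz function, so convolving the given $f\in L^p(\R)$ against it produces an $L^p(\R)$ function. For the high-frequency part, using $f=I_\pm^s[g]$ with $g\in L^p(\R)$ one has $\bigl(1-\chi(\xi)\bigr)(1+|\xi|^2)^{s/2}\mathcal F f=\bigl[(1-\chi(\xi))(1+|\xi|^2)^{s/2}(\pm i\xi)^{-s}\bigr]\mathcal F g$, and the bracketed symbol is smooth, supported away from $0$, and satisfies the Mihlin condition, hence is an $L^p(\R)$-multiplier. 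Adding the two contributions gives $\mathcal F^{-1}\bigl((1+|\xi|^2)^{s/2}\mathcal F f\bigr)\in L^p(\R)$, i.e. $f\in H^s_p(\R)$.

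The hard part, as indicated, is exactly the low-frequency analysis: away from $\xi=0$ the Bessel and Liouville multipliers differ only by a benign bounded factor, but at $\xi=0$ the Liouville potentials carry a true singularity, so one must genuinely exploit the $L^p$-membership of $f$ itself, rather than merely that of the fractional-integral density $g$, to control that frequency band. Two further points are routine but require care: all multiplier manipulations must be read in the sense of tempered distributions and justified through Mihlin--H\"ormander together with density of $C_c^\infty(\R)$; and the standing assumption $1<p<1/s$ is used both to make $I^s(L^p(\R))$ a space of honest functions (via Hardy--Littlewood--Sobolev) and to stay in the range of validity of the multiplier theorem, while the identification $I^s_+(L^p(\R))=I^s_-(L^p(\R))=I^s(L^p(\R))$ is the one already recalled from \cite[Cor. to Thm 11.4 and 11.5]{Samko}.
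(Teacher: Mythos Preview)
The paper does not prove this theorem: it is simply quoted from \cite[Corollary to Thm~18.2]{Samko}, so there is no in-paper argument to compare against. Your outline is correct and is essentially the standard Fourier-multiplier proof one finds in Samko's book (Chapters~7 and~18): compare the Bessel symbol $(1+|\xi|^2)^{s/2}$ with the Liouville symbol $(\pm i\xi)^{s}$, observe that their ratio is a Mihlin--H\"ormander multiplier away from $\xi=0$, and use the additional $L^p$ hypothesis to absorb the low-frequency singularity $|\xi|^{-s}$ that the Liouville potential carries. Your identification of the one genuine obstacle --- the blow-up of $(1+|\xi|^2)^{s/2}(\pm i\xi)^{-s}$ at the origin, which forces the intersection with $L^p(\R)$ in the statement --- is exactly right, and the low/high-frequency splitting via a cutoff $\chi$ is the canonical way to handle it. The only places where a referee might ask for a line more of justification are (i) the distributional identity $\mathcal F\bigl(I_\pm^s[g]\bigr)=(\pm i\xi)^{-s}\mathcal F g$ for general $g\in L^p(\R)$, which you correctly flag as needing density plus the Hardy--Littlewood--Sobolev bound $I^s_\pm:L^p\to L^q$, and (ii) the Mihlin estimate for $m(\xi)=(\pm i\xi)^s(1+|\xi|^2)^{-s/2}$, which is elementary but should perhaps be written out (on each half-line $m$ is smooth with $|\xi\,m'(\xi)|\le s$).
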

	We state a useful lemma, taken from \cite[Lemma 2.39, p.56]{Baladibook}
	\begin{lemma}\label{lemma:regshiftop}
		Let $0\leq\tilde s<s\leq 1$, and $1<p<\infty$. For $g\in H^s_p(\mathbb R)$, and $T\in C^1$, one has
		\begin{equation*}
			\|g\circ T-g\|_{H^{\tilde s}_p}\leq Cd_{C^1}(T,Id)^{s-\tilde s}\|g\|_{H^s_p}, 
		\end{equation*}
		with $C$ depending boundedly on $\|T\|_{C^1}$.
	\end{lemma}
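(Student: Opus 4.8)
The plan for establishing this estimate is to deduce it by complex interpolation applied to the linear operator $R:=C_T-\Id$, where $C_T g:=g\circ T$, gluing together two families of endpoint bounds. First I would reduce to the case in which $T$ is a $C^1$-diffeomorphism of $\R$ with $d:=d_{C^1}(T,\Id)$ small — the only regime in which the statement gets used — so that $|T'|$ and $|(T^{-1})'|$ lie in $[1/2,2]$, and write $T=\Id+u$ with $\|u\|_{C^0}\le d$. The change of variables $y=T(x)$ gives at once $\|R\|_{L^p\to L^p}\le C$, and the chain rule $(g\circ T)'=(g'\circ T)\,T'$ together with the same change of variables gives $\|C_T\|_{H^1_p\to H^1_p}\le C$ (using that $H^1_p$ coincides with the classical Sobolev space $W^{1,p}$ with equivalent norms for $1<p<\infty$), hence $\|R\|_{H^1_p\to H^1_p}\le C$, all constants depending only on $\|T\|_{C^1}$. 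Interpolating these two bounds via the Calder\'on--Stein identity $[L^p,H^1_p]_s=H^s_p$, valid for $1<p<\infty$, yields $\|R\|_{H^s_p\to H^s_p}\le C$ for every $s\in[0,1]$.

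Next I would extract the ``gain'' at the bottom endpoint, namely $\|R\|_{H^1_p\to L^p}\le C\,d$. For $g\in C_c^\infty(\R)$ the homotopy $T_\tau:=\Id+\tau u$ gives
\[
(g\circ T-g)(x)=\int_0^1 g'(T_\tau(x))\,u(x)\,d\tau,
\]
and, each $T_\tau$ being a diffeomorphism with $|T_\tau'|\ge 1-d\ge 1/2$, Minkowski's integral inequality and the change of variables $y=T_\tau(x)$ give $\|g\circ T-g\|_{L^p}\le\|u\|_{C^0}\sup_{\tau\in[0,1]}\|g'\circ T_\tau\|_{L^p}\le C\,d\,\|g\|_{H^1_p}$; density of $C_c^\infty$ in $H^1_p$ extends this to all of $H^1_p$. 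Interpolating this with $\|R\|_{L^p\to L^p}\le C$ (same target $L^p$, interpolating the domains) then produces $\|R\|_{H^s_p\to L^p}\le C\,d^{\,s}$ for all $s\in[0,1]$.

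Finally I would combine the two families by interpolating in the target: keeping the domain $H^s_p$ fixed and using $[L^p,H^s_p]_\theta=H^{\theta s}_p$, the bounds $\|R\|_{H^s_p\to L^p}\le C d^{\,s}$ and $\|R\|_{H^s_p\to H^s_p}\le C$ give, for $\theta:=\tilde s/s\in[0,1)$,
\[
\|R\|_{H^s_p\to H^{\tilde s}_p}\le\bigl(Cd^{\,s}\bigr)^{1-\theta}C^{\theta}=C\,d^{\,s(1-\theta)}=C\,d^{\,s-\tilde s},
\]
which is the asserted inequality, with $C$ depending boundedly on $\|T\|_{C^1}$. I expect the only genuinely delicate point to be that for $p\neq 2$ the Bessel potential spaces $H^s_p$ neither coincide with the Sobolev--Slobodeckij spaces nor admit an elementary difference-quotient characterization; this is precisely what forces one to route every estimate through the classical endpoints $s\in\{0,1\}$, where the spaces are $L^p$ and $W^{1,p}$, and to glue by the interpolation theorem for $H^s_p$ rather than estimating moduli of continuity directly. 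A secondary, purely bookkeeping point is the reduction to $T$ close to the identity, needed so that the changes of variables above are licit with Jacobians bounded away from $0$ and $\infty$; this is harmless in the applications of the lemma.
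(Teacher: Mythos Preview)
The paper does not actually prove this lemma: it is merely quoted from \cite[Lemma~2.39, p.~56]{Baladibook}. Your argument, by contrast, supplies a self-contained proof via complex interpolation, and it is correct. The three-step interpolation (first domain $L^p$--$H^1_p$ with fixed target to get $\|R\|_{H^s_p\to L^p}\le Cd^s$, then domain $L^p$--$H^1_p$ with matching target to get $\|R\|_{H^s_p\to H^s_p}\le C$, finally target $L^p$--$H^s_p$ with fixed domain $H^s_p$) is exactly the right architecture, and the identity $[L^p,H^1_p]_s=H^s_p$ together with reiteration for $[L^p,H^s_p]_\theta=H^{\theta s}_p$ is classical for $1<p<\infty$. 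This is, in fact, essentially the argument given in Baladi's book, so you have reconstructed the cited proof.

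Two small remarks. First, your reduction to $d=d_{C^1}(T,\Id)$ small is not just bookkeeping but genuinely needed for the statement to make sense: without $T$ being a diffeomorphism with Jacobian bounded away from zero, $g\circ T$ need not even lie in $L^p$, let alone $H^s_p$. The lemma as stated in the paper tacitly assumes this (and it is the only regime in which the paper applies it, namely $T=\Id+tX_{t_0}$ with $|t|$ small). Second, once $d\ge 1$ the conclusion is immediate from $\|R\|_{H^s_p\to H^{\tilde s}_p}\le \|R\|_{H^s_p\to H^s_p}\le C\le Cd^{s-\tilde s}$, so the small-$d$ case is indeed the only one with content.
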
 
	Finally, we recall how the Marchaud derivative acts on the scale of Sobolev spaces: this is a consequence of Theorem \ref{thm:fracintfracsobolev} and \cite[Thm 5.3]{Samko}.
	\begin{proposition}\label{prop:Marchaudboundedop}
		Let $s>0$, $0\le\eta<s$ and $1<p<\frac{1}{s}$. Then \[M^\eta_{\pm}(H^s_p(\R))\subset L^q(\R)\cap I^{s-\eta}(L^p(\R)),\]
		with $q$ such that $\frac{1}{q}-\frac{1}{p}=\eta$. 
	\end{proposition}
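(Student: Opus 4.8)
The plan is to reduce the statement to three classical facts about one-sided fractional operators on $\R$: the fractional-integral characterization of $H^s_p$ from Theorem~\ref{thm:fracintfracsobolev}, the index (semigroup) law $I^\eta_\pm\circ I^{s-\eta}_\pm=I^s_\pm$, and the fact that $M^\eta_\pm$ is a left inverse of $I^\eta_\pm$. One may assume $0<\eta<s$, since for $\eta=0$ one has $M^0_\pm=\Id$ and the assertion is exactly the content of Theorem~\ref{thm:fracintfracsobolev}.

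First I would fix $f\in H^s_p(\R)$. Since $1<p<\tfrac1s<\tfrac1\eta$, in particular $f\in L^p(\R)$ with $p<\tfrac1\eta$, so that $M^\eta_\pm[f]$ is well defined as the $L^p$-extension of the Marchaud derivative alluded to in the footnote of Definition~\ref{def:Marchaudderiv}. By Theorem~\ref{thm:fracintfracsobolev} together with the identity $I^s_+(L^p(\R))=I^s_-(L^p(\R))=I^s(L^p(\R))$, there exist $\phi_+,\phi_-\in L^p(\R)$ with $f=I^s_+[\phi_+]=I^s_-[\phi_-]$; one needs both representations, the left one to compute $M^\eta_+[f]$ and the right one for $M^\eta_-[f]$. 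Set $g_\pm:=I^{s-\eta}_\pm[\phi_\pm]$. Since $0<s-\eta<1$ and $1<p<\tfrac1s<\tfrac1{s-\eta}$, the Hardy--Littlewood--Sobolev theorem (\cite[Thm 5.3]{Samko}) gives $g_\pm\in L^q(\R)$ for the exponent $q$ of the statement, and one checks moreover that $q<\tfrac1\eta$.

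The core of the argument is the composition $M^\eta_\pm\circ I^s_\pm$. Using the index law for one-sided fractional integrals, valid on $L^p(\R)$ in the present range of exponents, I would write $f=I^s_\pm[\phi_\pm]=I^\eta_\pm\bigl[I^{s-\eta}_\pm[\phi_\pm]\bigr]=I^\eta_\pm[g_\pm]$ and then apply $M^\eta_\pm$. Since $g_\pm\in L^q(\R)$ with $q<\tfrac1\eta$, the inversion property $M^\eta_\pm\circ I^\eta_\pm=\Id$ on $L^r(\R)$, $1\le r<\tfrac1\eta$ (see the discussion after Definition~\ref{def:Marchaudderiv} and \cite[Sec. 5.4, Thm 6.1]{Samko}), gives
\[M^\eta_\pm[f]=M^\eta_\pm\bigl[I^\eta_\pm[g_\pm]\bigr]=g_\pm=I^{s-\eta}_\pm[\phi_\pm].\]
Hence $M^\eta_\pm[f]\in I^{s-\eta}_\pm(L^p(\R))=I^{s-\eta}(L^p(\R))$, while the Hardy--Littlewood--Sobolev bound already recorded gives $M^\eta_\pm[f]=g_\pm\in L^q(\R)$, which is the claim.

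The only genuine subtlety --- and the reason one cannot simply substitute $f$ into the pointwise formula of Definition~\ref{def:Marchaudderiv} --- is that, because $sp<1$, a generic $f\in H^s_p(\R)$ need not even be continuous, so every step must be carried out at the level of the fractional-integral representation $f=I^s_\pm[\phi_\pm]$. Accordingly the point requiring care is the bookkeeping of exponents: one must check that $p<\tfrac1s$ forces $g_\pm\in L^q$ with $q<\tfrac1\eta$ (so that $M^\eta_\pm\circ I^\eta_\pm=\Id$ applies to $g_\pm$), and that $s-\eta<1$ together with $p<\tfrac1{s-\eta}$ (so that both the index law and the Hardy--Littlewood--Sobolev theorem are applicable), after which Theorem~\ref{thm:fracintfracsobolev} and \cite[Thm 5.3]{Samko} complete the proof.
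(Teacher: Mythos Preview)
Your overall strategy is exactly what the paper intends: the paper's ``proof'' is just the sentence that the proposition is a consequence of Theorem~\ref{thm:fracintfracsobolev} and \cite[Thm~5.3]{Samko}, and you have correctly unpacked this into the chain
\[
f=I^s_\pm[\phi_\pm]=I^\eta_\pm\bigl[I^{s-\eta}_\pm[\phi_\pm]\bigr],\qquad M^\eta_\pm[f]=I^{s-\eta}_\pm[\phi_\pm],
\]
using the semigroup law and the left-inverse property. This cleanly gives $M^\eta_\pm[f]\in I^{s-\eta}(L^p(\R))$, and your bookkeeping of the exponents needed for the semigroup law and for the inversion $M^\eta_\pm\circ I^\eta_\pm=\Id$ is correct.

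There is, however, a slip in the $L^q$ step. You write that Hardy--Littlewood--Sobolev applied to $g_\pm=I^{s-\eta}_\pm[\phi_\pm]$ with $\phi_\pm\in L^p$ yields $g_\pm\in L^q(\R)$ ``for the exponent $q$ of the statement'', i.e.\ $\tfrac1q=\tfrac1p+\eta$. But \cite[Thm~5.3]{Samko} applied to $I^{s-\eta}_\pm$ on $L^p$ produces the exponent $r$ determined by
\[
\frac1r=\frac1p-(s-\eta),
\]
which is not the stated $q$ (indeed $r>p$ while $q<p$, and the two agree only when $s=0$). Your check ``$q<1/\eta$'' happens to hold for both exponents, so it does not detect the discrepancy. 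Thus your argument actually proves
\[
M^\eta_\pm(H^s_p(\R))\subset L^r(\R)\cap I^{s-\eta}(L^p(\R)),\qquad \frac1r=\frac1p-(s-\eta),
\]
which is precisely what the two cited ingredients (Theorem~\ref{thm:fracintfracsobolev} and HLS) naturally give. The relation $\tfrac1q-\tfrac1p=\eta$ in the proposition appears to be a misprint for this $r$ (note that, as stated, it would even allow $q<1$); this does not affect the use of the proposition in the proof of Theorem~\ref{thm:reginvdens}, where one only needs the $I^{s-\eta}(L^p)$ part together with some compatible Lebesgue exponent and then matches parameters.
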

	\begin{remark}
		Let us mention that there exists several other notions of fractional derivatives (Bessel fractional potential or the more classical Riemann-Liouville derivative, to name a few), that may appear more natural in different contexts. However, we believe that the result one would obtain using those notions of fractional derivatives are qualitatively similar to ours. Indeed, Proposition \ref{prop:Marchaudboundedop}, a key ingredient in our approach, also holds for other types of fractional derivatives (by construction in the case of the Bessel fractional potential and fractional Sobolev spaces). 
	\end{remark}
	\section{Marchaud derivative of the invariant density at Misiurewicz parameters}
	Recall that at a stochastic parameter $t_0$, the quadratic map $f_{t_0}$ (or more generally, a unimodal real-analytic map with negative Schwarzian derivative) admits a unique absolutely continuous invariant probability measure $d\mu_{t_0}:=\rho_{t_0}dx$. 
	\\	Denoting by $(c_k)_{k\geq 0}$ the critical orbit, Ruelle proved \cite[Theorem 9]{Ruelle} the following decomposition for the invariant density $\rho_{t_0}$ of a Misiurewicz real-analytic unimodal map:
	\begin{small}
		\begin{equation}\label{eq:invdensdec}
			\rho_{t_0}(x)=\psi_0(x)+\sum_{k=1}^\infty C_{k,0}\frac{\mathds 1_{w_0>\sigma_k (x-c_k)>0}}{\sqrt{\sigma_k(x-c_k)}}+C_{k,1}\mathds 1_{w_1>\sigma_k(x-c_k)>0}\sqrt{\sigma_k(x-c_k)}
		\end{equation}
	\end{small}
	where $\psi_0$ is a $C^1$ function, $C_{k,0}:=\frac{\rho_{t_0}}{|(f^{k-1}_{t_0})'(c_1)|^{1/2}}$, $|C_{k,1}|\leq\frac{U_{t_0}}{|(f^{k-1}_{t_0})'(c_1)|^{3/2}}$, $\sigma_k=\text{sgn}(Df^{k-1}_{t_0}(c_1))\in\{\pm\}$ and $w_0,w_1>0$.
	\\Our goal here is to study the regularity of the $\eta$-Marchaud derivative, $0\le\eta<\frac{1}{2}$ of $\rho_{t_0}$, for $t_0$ a Misiurewicz parameter. The main result of this section is:
	\begin{theorem}\label{thm:reginvdens}
		Let $f_{t_0}$ be a Misiurewicz real-analytic unimodal map as in Remark \ref{rem:gensetting}, with invariant density $\rho_{t_0}$. 
		\\For $0\le\eta<\frac{1}{2}$, the Marchaud derivatives $M^\eta_{\pm}[\rho_{t_0}]\in H^s_p(\R)$, for any $0\le s<\frac{1}{2}-\eta$, $1<p<\frac{1}{1/2+\eta+s}$.
	\end{theorem}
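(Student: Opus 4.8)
The plan is to reduce the statement, via Ruelle's decomposition \eqref{eq:invdensdec}, to an explicit computation for two model functions. First, a Misiurewicz parameter is in particular a Collet--Eckmann parameter, so there are $\lambda_c>1$ and $a>0$ with $|(f^{k-1}_{t_0})'(c_1)|\ge a\,\lambda_c^{k}$ for all $k$; together with the bounds on $C_{k,0},C_{k,1}$ in \eqref{eq:invdensdec} this gives geometric decay, whence $\sum_{k\ge1}\bigl(|C_{k,0}|+|C_{k,1}|\bigr)<\infty$. Set $g_0(y):=y^{-1/2}\mathds 1_{(0,w_0)}(y)$ and $g_1(y):=y^{1/2}\mathds 1_{(0,w_1)}(y)$. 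Each summand of \eqref{eq:invdensdec} equals $C_{k,i}\,g_i\bigl(\sigma_k(x-c_k)\bigr)$, and since $M^\eta_\pm$ and $\|\cdot\|_{H^s_p(\R)}$ are invariant under translations and (up to swapping $M^\eta_+\leftrightarrow M^\eta_-$) under the reflection $y\mapsto-y$, the $H^s_p$-norm of the $\pm$-Marchaud derivative of such a summand equals $|C_{k,i}|$ times one of $\|M^\eta_+[g_i]\|_{H^s_p}$, $\|M^\eta_-[g_i]\|_{H^s_p}$. Hence it suffices to establish (a) $M^\eta_\pm[g_0],M^\eta_\pm[g_1]\in H^s_p(\R)$ for $0\le s<\tfrac12-\eta$, $1<p<\frac1{1/2+\eta+s}$, with a norm bound, (b) $M^\eta_\pm[\psi_0]\in H^s_p(\R)$, and (c) that $M^\eta_\pm$ may be applied termwise to the series.

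Step (a) is the heart. I would compute $M^\eta_\pm[g_0]$ directly from Definition \ref{def:Marchaudderiv}, splitting the defining integral at $0$ and at $w_0$. Using that $g_0(\lambda y)=\lambda^{-1/2}g_0(y)$ away from the cutoff and that $M^\eta_\pm$ scales with weight $\eta$, one finds that $M^\eta_\pm[g_0]$ is positively homogeneous of degree $-\tfrac12-\eta$ in a neighbourhood of $0$, so it equals a constant times $|x|^{-1/2-\eta}$ there (vanishing identically on the other side for $M^\eta_+$), has at worst a $|x-w_0|^{-\eta}$-type singularity at $w_0$, is $C^\infty$ away from $\{0,w_0\}$, and decays like $\tfrac{\eta}{\Gamma(1-\eta)}\bigl(\int g_0\bigr)\,|x|^{-1-\eta}$ at infinity (because $g_0$ has compact support); one checks the leading coefficient at $0$ is non-zero, so the singularity is genuinely of order $\tfrac12+\eta$. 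The same scheme for $g_1$ produces the milder behaviour $|x|^{1/2-\eta}$ near $0$ together with a $|x-w_1|^{-\eta}$ term at the cutoff and an $|x|^{-1-\eta}$ tail. I then invoke the sharp membership criterion for homogeneous-type singularities in Bessel potential spaces: for $0<\alpha<1$, $s\ge0$, $1<p<\infty$, a function behaving like $|x-x_0|^{-\alpha}$ near $x_0$ and smooth with decaying tails elsewhere lies in $H^s_p(\R)$ if and only if $s+\alpha<\tfrac1p$. For $g_0$ the binding contribution is $\alpha=\tfrac12+\eta$, giving exactly $s<\tfrac1p-\tfrac12-\eta$, i.e. $1<p<\frac1{1/2+\eta+s}$ --- the range of the statement (which also shows it is sharp for $g_0$); the $|x-w_i|^{-\eta}$ and $|x|^{1/2-\eta}$ contributions only require $s<\tfrac1p-\eta$, automatically satisfied, and the $C^\infty$ tails of order $|x|^{-1-\eta}$ are harmless since $(1+\eta)p>1$.

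For (b), since $\psi_0\in C^1$ we may extend it to a $C^1$ function of compact support --- any jump so created at the endpoints of $I_{t_0}$ being only an $H^s_p$-obstruction for $s<\tfrac1p$, weaker than needed --- so $\psi_0\in H^{s'}_p$ for all $s'<1$, and $M^\eta_\pm[\psi_0]$ is $C^\beta$ for every $\beta<1-\eta$ with an $O(|x|^{-1-\eta})$ tail, hence in $H^s_p(\R)$ for $s<\tfrac12-\eta$ and all admissible $p$; alternatively one applies Proposition \ref{prop:Marchaudboundedop}, upgrading its $L^q$ conclusion to $L^p$ using the polynomial tail and the compact support of $\rho_{t_0}$. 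For (c), the partial sums of \eqref{eq:invdensdec} converge to $\rho_{t_0}-\psi_0$ in $L^1(\R)$, while by (a) and the summability of the $C_{k,i}$ the corresponding Marchaud derivatives converge in $H^s_p(\R)$; a mollification argument, using that $M^\eta_\pm$ is translation-invariant and hence commutes with convolution by an approximate identity, identifies the $H^s_p$-limit with $M^\eta_\pm[\rho_{t_0}-\psi_0]$. Adding back (b) gives $M^\eta_\pm[\rho_{t_0}]\in H^s_p(\R)$.

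The main obstacle is step (a): one must carry out the homogeneity computation carefully enough to identify the order of the singularity of $M^\eta_\pm[g_0]$ at the critical-orbit points as exactly $\tfrac12+\eta$ and to verify the leading coefficient does not vanish, and then bookkeep the three competing contributions ($|x|^{-1/2-\eta}$, $|x-w_0|^{-\eta}$, $|x|^{-1-\eta}$) against the sharp Bessel-potential threshold --- this is precisely where the range $1<p<\frac1{1/2+\eta+s}$ comes from. A secondary point worth care is that $M^\eta_\pm$ does not preserve compact support, so membership in $H^s_p(\R)$, as opposed to in $H^s_p$ of a bounded interval, really does rely on the explicit $|x|^{-1-\eta}$ tail estimates above.
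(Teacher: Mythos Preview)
Your proposal is correct but takes a genuinely different route from the paper. The paper never computes $M^\eta_\pm$ on the model spikes at all. Instead it observes (Lemmas \ref{lemme:fracint} and \ref{lemme:regofspike}) that each truncated power $\tilde f_{\pm 1/2,c_k,w_i,\sigma}$ already lies in $H^{s'}_p(\R)$ for $0\le s'<\tfrac12$, $1<p<\frac{1}{1/2+s'}$, with $H^{s'}_p$ norm independent of the basepoint $c_k$; the Collet--Eckmann bound then gives $\rho_{t_0}\in H^{s'}_p(\R)$ as an exponentially convergent series. The Marchaud derivative enters only at the very end, through the abstract mapping property of Proposition \ref{prop:Marchaudboundedop}: $M^\eta_\pm:H^{s'}_p\to L^q\cap I^{s'-\eta}(L^p)$, and choosing $p=q$ in the admissible range yields $M^\eta_\pm[\rho_{t_0}]\in H^{s'-\eta}_p$ with $s=s'-\eta<\tfrac12-\eta$. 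This is considerably shorter and sidesteps entirely your steps (a), (b), (c): no explicit singularity analysis of $M^\eta_\pm[g_i]$, no tail estimates, and no termwise-differentiation justification are needed, since the operator is applied once to the already-assembled $\rho_{t_0}$. What your approach buys in exchange is explicit structural information: you identify the exact order $\tfrac12+\eta$ of the singularity of $M^\eta_\pm[\rho_{t_0}]$ at the post-critical points and thereby see that the range $1<p<\frac{1}{1/2+\eta+s}$ is sharp, which the paper's black-box application of Proposition \ref{prop:Marchaudboundedop} does not reveal. (Incidentally, the paper contains a commented-out passage carrying out essentially your step (a), suggesting the author first tried your route before finding the shortcut.) One point in your write-up that would need tightening is the ``sharp membership criterion'' you invoke in step (a): this is correct, but in the paper's framework it is precisely the combination of Lemma \ref{lemme:fracint} and Theorem \ref{thm:fracintfracsobolev}, so you should cite those rather than leave it as an unreferenced folklore statement.
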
 
	We introduce the following notation, for $-1\le\beta\le \frac{1}{2}$:
	\begin{equation}
		\left\{
		\begin{aligned}
			&f_{\beta,a,+}(x):=(x-a)^{\beta}\quad &f_{\beta,a,-}(x):=(a-x)^{\beta}\\
			&\tilde f_{\beta,a,A,+}(x):=\mathds 1_{a<x<a+A}(x-a)^{\beta}\quad &\tilde f_{\beta,a,A,-}(x):=\mathds 1_{a-A<x<a}(a-x)^{\beta}
		\end{aligned}
		\right.
	\end{equation}
	We begin by computing relevant fractional integrals of the previously defined functions:
	\begin{lemma}\label{lemme:fracint}
		For any $0\le\eta\le 1$, and $\sigma\in\{\pm\}$ one has
		\begin{equation}\label{eq:fracint}
			I_{a_\sigma}^\eta(f_{\beta,a,\sigma})=\dfrac{\Gamma(\beta+1)}{\Gamma(\beta+1+\eta)}f_{\eta+\beta,a,\sigma}.
		\end{equation} 
		In particular, for $0\le\eta<\frac{1}{2}$,	
		\begin{align*}
			f_{-1/2,a,\sigma}&=I_{a_\sigma}^\eta\left(\dfrac{\Gamma(1/2)}{\Gamma(1/2-\eta)}f_{-1/2-\eta,a,\sigma}\right)\\
			f_{1/2,a,\sigma}&=I_{a_\sigma}^\eta\left(\dfrac{\Gamma(3/2)}{\Gamma(3/2-\eta)}f_{1/2-\eta,a,\sigma} \right)
		\end{align*}
		so that $f_{\pm1/2,a,+}\in I^s(L^p(a,a+A))$, resp. $f_{\pm1/2,a,-}\in I^s(L^p(a-A,a))$, for $0\le s<\frac{1}{2}$, $1<p<\frac{1}{1/2+s}$, and for any $A>0$.
	\end{lemma}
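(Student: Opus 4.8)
The plan is to establish the master identity \eqref{eq:fracint} by a direct computation with the Beta function, and then read off all the stated consequences as special cases. First I would treat the ``$-$'' case, say $I_{a_-}^\eta(f_{\beta,a,-})(x)$, which by Definition \ref{def:fracint} (the interval version \eqref{def:intervalfracint}, extended to the half-line / full line as needed) equals $\frac{1}{\Gamma(\eta)}\int_x^{+\infty}\frac{(a-t)^\beta}{(t-x)^{1-\eta}}\,dt$ when $x<a$, the integrand being supported on $(x,a)$ since $(a-t)^\beta$ is only defined (and we only integrate) for $t<a$. I would substitute $t = x + (a-x)u$ with $u\in(0,1)$, so that $a-t=(a-x)(1-u)$ and $t-x=(a-x)u$, giving $dt=(a-x)\,du$ and
\[
I_{a_-}^\eta(f_{\beta,a,-})(x)=\frac{(a-x)^{\beta+\eta}}{\Gamma(\eta)}\int_0^1 (1-u)^\beta u^{\eta-1}\,du
=\frac{(a-x)^{\beta+\eta}}{\Gamma(\eta)}\,B(\eta,\beta+1).
\]
Using $B(\eta,\beta+1)=\frac{\Gamma(\eta)\Gamma(\beta+1)}{\Gamma(\beta+1+\eta)}$ yields $I_{a_-}^\eta(f_{\beta,a,-})=\frac{\Gamma(\beta+1)}{\Gamma(\beta+1+\eta)}f_{\beta+\eta,a,-}$, which is \eqref{eq:fracint} for $\sigma=-$. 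The ``$+$'' case is identical after the reflection $x\mapsto 2a-x$ (or the substitution $t=x-(x-a)u$), so \eqref{eq:fracint} holds for both signs and all $0\le\eta\le1$, with the convention $\Gamma(\beta+1)/\Gamma(\beta+1+\eta)=1$ when $\eta=0$; the convergence of the Beta integral at $u=0$ needs $\eta>0$ and at $u=1$ needs $\beta>-1$, both of which hold in the stated range $-1\le\beta\le\tfrac12$ (with $\beta=-1$ only entering when $\eta=0$, where there is nothing to prove).

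Next I would specialize $\beta=-1/2$ and $\beta=1/2$ in \eqref{eq:fracint} with $\eta$ replaced by $\eta$ acting on the exponent $\beta-\eta$: since $I_{a_\sigma}^\eta(f_{-1/2-\eta,a,\sigma})=\frac{\Gamma(1/2-\eta)}{\Gamma(1/2)}f_{-1/2,a,\sigma}$ and similarly with $3/2$ in place of $1/2$, rearranging gives the two displayed inversion formulas
\[
f_{-1/2,a,\sigma}=I_{a_\sigma}^\eta\!\left(\tfrac{\Gamma(1/2)}{\Gamma(1/2-\eta)}f_{-1/2-\eta,a,\sigma}\right),\qquad
f_{1/2,a,\sigma}=I_{a_\sigma}^\eta\!\left(\tfrac{\Gamma(3/2)}{\Gamma(3/2-\eta)}f_{1/2-\eta,a,\sigma}\right),
\]
valid for $0\le\eta<1/2$ so that $\Gamma(1/2-\eta)$ is finite and nonzero. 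Finally, for the membership statement I restrict to the interval: on $(a,a+A)$ the function $f_{1/2-\eta,a,+}$ is bounded hence in every $L^p$, while $f_{-1/2-\eta,a,+}(x)=(x-a)^{-1/2-\eta}$ lies in $L^p(a,a+A)$ exactly when $p(1/2+\eta)<1$, i.e. $p<\frac{1}{1/2+\eta}$. To land in $I^s(L^p)$ for a target $0\le s<1/2$, I would write $f_{\pm1/2,a,+}=I^s_{a_+}(g)$ with $g=\frac{\Gamma(\cdot)}{\Gamma(\cdot-s)}f_{\pm1/2-s,a,+}$ (the same identity with $\eta$ replaced by $s$), and note $g\in L^p(a,a+A)$ whenever $p<\frac{1}{1/2+s}$; since $\frac{1}{1/2+s}\le\frac{1}{1/2+\eta}$ is not needed here, the condition $1<p<\frac{1}{1/2+s}$ suffices, which is exactly what is claimed (and it also guarantees $f_{\pm1/2,a,+}\in L^p$ so the representation is consistent with Theorem \ref{thm:fracintfracsobolev}). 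The ``$-$'' version on $(a-A,a)$ is the mirror image.

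The computation itself is routine; the only genuine points of care are (i) making sure the endpoint behaviour of the Beta integral is legitimate throughout the closed range $-1\le\beta\le 1/2$ and $0\le\eta\le1$ — in particular handling the degenerate cases $\eta=0$ (where $I^0=\mathrm{Id}$ and \eqref{eq:fracint} is trivial) and $\beta=-1$ (which only pairs with $\eta=0$) — and (ii) keeping the truncated functions $\tilde f_{\beta,a,A,\sigma}$ straight, so that the fractional integral over the interval $(a,a+A)$ really does reproduce $f_{\beta+\eta,a,+}$ on that interval, with no boundary contribution from the cutoff at $a+A$ (which is automatic because $I^\eta_{a_+}$ integrates only over $(a,x)\subset(a,a+A)$). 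I expect no serious obstacle; the lemma is a bookkeeping step assembling the classical Riemann–Liouville identity $I^\eta[(x-a)^\beta]\propto(x-a)^{\beta+\eta}$ into the precise form and function spaces needed downstream for Theorem \ref{thm:reginvdens}.
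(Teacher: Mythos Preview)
Your proposal is correct and follows essentially the same route as the paper: a direct change of variables reducing the fractional integral of $(x-a)^\beta$ to a Beta integral, followed by the identity $B(\eta,\beta+1)=\Gamma(\eta)\Gamma(\beta+1)/\Gamma(\beta+1+\eta)$. The only cosmetic difference is that the paper details the $\sigma=+$ case with the substitution $t=a+(x-a)s$ while you detail $\sigma=-$ with $t=x+(a-x)u$; your additional discussion of the endpoint cases $\eta=0$, $\beta=-1$ and of the $L^p$ membership of $f_{-1/2-s,a,\sigma}$ is more explicit than the paper's, but not a different argument.
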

	\begin{proof}
		We focus on the case $\sigma=+$, the other case being similar. By \eqref{def:intervalfracint}, for $x>a$ one has, by the change of variables $t=a+(x-a)s$,
		\begin{align*}
			I_{a+}^\eta(f_{\beta,a,+})(x)&=\dfrac{1}{\Gamma(\eta)}\int_a^x (t-a)^{\beta}(x-t)^{\eta-1}dt\\
			&=\dfrac{1}{\Gamma(\eta)}(x-a)^{\beta+\eta}\int_0^1 s^{\beta}(1-s)^{\eta-1}ds\\
			&=\dfrac{B(\beta+1,\eta)}{\Gamma(\eta)}(x-a)^{\beta+\eta},
		\end{align*}
		where $B(\cdot,\cdot)$ is the Beta function defined by $B(x,y)=\dfrac{\Gamma(x)\Gamma(y)}{\Gamma(x+y)}$ (see, e.g, \cite[Section 1.3 D.]{Samko} for further properties of the Beta function). This gives \eqref{eq:fracint}.
	\end{proof}
	
	This immediately yields the following result:
	\begin{lemma}\label{lemme:regofspike}
		Let $A>0$. One has $\tilde f_{\pm1/2,a,A,\sigma}\in H^s_p(\R)$, for any $0\le s<\frac{1}{2}$, $1<p<\frac{1}{1/2+s}$.
		\\Furthermore, $\|\tilde f_{\pm1/2,a,A,\sigma}\|_{H^s_p}=\|\tilde f_{\pm1/2,0,A,\sigma}\|_{H^s_p}$ is independent of $a$.
	\end{lemma}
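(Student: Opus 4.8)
\textbf{Proof plan for Lemma \ref{lemme:regofspike}.}
The plan is to show membership in $H^s_p(\R)$ by verifying the two conditions in Theorem \ref{thm:fracintfracsobolev}: that $\tilde f_{\pm 1/2,a,A,\sigma}$ lies in $L^p(\R)$ and that it is the $s$-fractional integral of some $L^p(\R)$ function. The $L^p$ membership is immediate and essentially routine: the function is compactly supported (on $[a,a+A]$ or $[a-A,a]$) and its only singularity is the integrable $|x-a|^{-1/2}$ blow-up, which lies in $L^p$ locally precisely when $p<2$, hence certainly when $1<p<\frac{1}{1/2+s}\le 2$. So the work is in producing the fractional-integral representative.

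For that, I would invoke Lemma \ref{lemme:fracint}, which gives, for $0\le\eta<\frac12$,
\[
f_{-1/2,a,\sigma}=I^\eta_{a_\sigma}\!\left(\tfrac{\Gamma(1/2)}{\Gamma(1/2-\eta)}f_{-1/2-\eta,a,\sigma}\right),\qquad
f_{1/2,a,\sigma}=I^\eta_{a_\sigma}\!\left(\tfrac{\Gamma(3/2)}{\Gamma(3/2-\eta)}f_{1/2-\eta,a,\sigma}\right).
\]
Taking $\eta=s$, one sees that on the one-sided interval the truncated spike $\tilde f_{\pm1/2,a,A,\sigma}$ is the interval fractional integral $I^s_{a_\sigma}$ of $\frac{\Gamma(\cdot)}{\Gamma(\cdot-s)}f_{\pm1/2-s,a,\sigma}$ restricted suitably; and the latter density is again a power singularity $|x-a|^{-1/2-s}$ (or $|x-a|^{1/2-s}$), which is in $L^p$ on a bounded interval iff $p(1/2+s)<1$, i.e.\ $1<p<\frac{1}{1/2+s}$ — exactly the stated range. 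This establishes $\tilde f_{\pm1/2,a,A,\sigma}\in I^s(L^p(a,a+A))$ (resp.\ $I^s(L^p(a-A,a))$), which is the last sentence of Lemma \ref{lemme:fracint}. One then needs to pass from the interval statement $I^s(L^p(a,a+A))$ to the whole-line statement $I^s(L^p(\R))$: since the truncated spike is supported in the bounded interval, extending the $L^p$ density by zero outside and checking that the whole-line fractional integral $I^s_\pm$ agrees with the interval one on the support is a direct computation (the extra tail contributes nothing because the density vanishes there). Combined with the $L^p$ bound, Theorem \ref{thm:fracintfracsobolev} then yields $\tilde f_{\pm1/2,a,A,\sigma}\in H^s_p(\R)$.

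The translation-invariance of the norm is the easy part and I would dispatch it last: the map $x\mapsto x-a$ is a translation of $\R$, translations are isometries of $L^p$ and commute with the Fourier multiplier $(1+|\xi|^2)^{s/2}$ defining $\|\cdot\|_{H^s_p}$, and they send $\tilde f_{\pm1/2,a,A,\sigma}$ to $\tilde f_{\pm1/2,0,A,\sigma}$; hence $\|\tilde f_{\pm1/2,a,A,\sigma}\|_{H^s_p}=\|\tilde f_{\pm1/2,0,A,\sigma}\|_{H^s_p}$, independent of $a$.

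The only genuinely delicate point I anticipate is the interval-to-line passage for the fractional integral together with confirming that truncating $f_{\pm1/2,a,\sigma}$ to the bounded window $\{a<x<a+A\}$ does not destroy the fractional-integral structure — one must check that $I^s_{a_\sigma}$ of the (already one-sided, hence automatically ``truncated'' on the relevant side) power density reproduces precisely the indicator-cut spike and not something with an unwanted tail. Since $I^s_{a+}$ integrates only over $(a,x)$, for $x<a+A$ one recovers $f_{-1/2,a,+}(x)$ on the nose, and for $x>a+A$ one would pick up a nonzero value, so strictly speaking the density must itself be truncated at $a+A$; verifying that this truncated density is still in $L^p$ (it is, being a subset of the support) and that its interval fractional integral equals $\tilde f_{-1/2,a,A,+}$ for all $x$ is the small computation that makes the argument airtight. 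Everything else is bookkeeping with the ranges of $s$ and $p$, which match up exactly as in the hypotheses.
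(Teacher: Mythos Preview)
Your overall strategy coincides with the paper's: check $L^p$, produce a fractional-integral representative via Lemma~\ref{lemme:fracint}, then apply Theorem~\ref{thm:fracintfracsobolev}; the translation-invariance argument at the end is also the same. The gap is exactly where you yourself flag it, the interval-to-line passage, and your proposed fix does not work.

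You suggest extending the density by zero outside $(a,a+A)$ and checking that the whole-line fractional integral agrees with the interval one ``on the support''. Agreement on the support is not enough: you need $I^s_+[\phi]=\tilde f_{-1/2,a,A,+}$ on all of $\R$, and for $x>a+A$ the zero-extended density gives
\[
I^s_+[\phi](x)=\frac{1}{\Gamma(s)}\int_a^{a+A}(t-a)^{-1/2-s}(x-t)^{s-1}\,dt>0,
\]
not zero. Your last-paragraph patch, truncating the density at $a+A$, does not help either: that is the same zero-extended density, and more generally any nonnegative $\phi$ supported in $(a,a+A)$ has $I^s_+[\phi]$ strictly positive on $(a+A,\infty)$. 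The genuine $L^p(\R)$ representative is $\phi=M^s_+[\tilde f_{-1/2,a,A,+}]$, whose support necessarily extends past $a+A$ and which carries an additional $(x-a-A)^{-s}$-type singularity coming from the jump of $\tilde f$ at $a+A$; showing this lies in $L^p$ for $1<p<\frac{1}{1/2+s}$ is doable but is not the ``small computation'' you describe.

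The paper sidesteps this entirely: having established $f_{\pm1/2,a,+}\in I^s(L^p(a,a+A))$ from Lemma~\ref{lemme:fracint}, it invokes \cite[Theorem~13.10]{Samko}, which is precisely the nontrivial extension theorem asserting that, for $1<p<1/s$, a function in $I^s(L^p(a,b))$ extended by zero to $\R$ lies in $I^s(L^p(\R))$. That citation is what closes the argument.
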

	\begin{proof}
		We treat only the case $\sigma=+$, the other case being similar. Recall that $\tilde f_{\pm1/2,a,A,+}=\mathds 1_{a<x<a+A}f_{\pm1/2,a,+}$, hence it is clear that it belongs to $L^p(\R)$ for $1\le p<2$.
		\\By the previous lemma, $f_{\pm1/2,a,+}\in I^s(L^p(a,a+A))$, for any $0\le s<\frac{1}{2}$, $1<p<\frac{1}{1/2+s}$, so that by \cite[Theorem 13.10]{Samko}, $\tilde f_{\pm1/2,a,A,+}\in I^s(L^p(\R))$ for the same range of $s$, $p$. The first part of the lemma then follows from Theorem \ref{thm:fracintfracsobolev}. 
		\\The second part of the lemma is easily seen from the definition of the $H^s_p$ norm and invariance of Lebesgue measure by translation. 
	\end{proof}
	
	\begin{proof}[Proof of Theorem \ref{thm:reginvdens}]
		Functions appearing in the series in Ruelle's decomposition \eqref{eq:invdensdec} are of the form $\tilde f_{\pm1/2,c_k,w_i,\sigma}$. Hence, 
		Lemma \ref{lemme:regofspike} entails that their $H^s_p$ norm are uniformly bounded, and the Collet-Eckmann condition \eqref{CE} insures that the series in \eqref{eq:invdensdec} converges exponentially fast in $H^s_p$ norm. Thus, $\rho_{t_0}\in H^s_p(\R)$ for all $0\le s<\frac{1}{2}$, $1<p<\frac{1}{1/2+s}$, and Proposition \ref{prop:Marchaudboundedop} yields $M_{\pm}^\eta[\rho_{t_0}]\in L^q(\R)\cap I^{s-\eta}(L^p(\R))$ for all $0\le s<\frac{1}{2}$, $1<p<\frac{1}{1/2+s}$ and $q\in\left(\frac{1}{1+\eta},\frac{1}{1/2+\eta+s}\right)$. Taking $p=q$, which is possible in the range $\left(1,\frac{1}{1/2+\eta+s}\right)$, gives the wanted result.
	\end{proof}
	\section{The fractional susceptibility functions}
	In this section, we show our main result: the formal series defining the fractional susceptibility functions \eqref{def:respfracsus}, \eqref{def:frozenfracsus} and \eqref{def:sffracsus} are holomorphic on a disk of radius greater than one.
	\\To alleviate notation we will denote by $I=I_{t_0}$.
	\begin{theorem}\label{thm:regfrozenfracsus}
		Let $f_{t_0}$ be a mixing, Misiurewicz real-analytic unimodal map as in Remark \ref{rem:gensetting}, $\phi\in L^\infty(I)$, $0\le\eta<1/2$. 
		\\Then the response, frozen and semifreddo fractional susceptibility functions $\Psi^{\rsp}_\phi(\eta,.)$, $\Psi^{\fr}_{\phi}(\eta,.)$ and $\Psi^{\smf}_\phi(\eta,.)$ are well-defined and holomorphic in a complex disk $\mathbb D(0,\theta^{-1})$ with $0<\theta<1$.
	\end{theorem}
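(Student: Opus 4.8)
The plan is to reduce the statement to a geometric bound $|a_j|\le C\theta^j$, with $0<\theta<1$, on the coefficients of the three power series; holomorphy on $\mathbb D(0,\theta^{-1})$ then follows at once. In each case the $j$-th coefficient has the form $a_j=\pm\int_I(\phi\circ f^j_{t_0})\,g_\eta\,dx$ for a single \emph{source} function $g_\eta\in L^1(I)$ independent of $j$: for \eqref{def:respfracsus}, $g^{\rsp}_\eta=M^\eta[\rho_{t_0}]$; for \eqref{def:frozenfracsus}, $g^{\fr}_\eta=M^\eta_t[\L_t\rho_{t_0}]\big|_{t=t_0}$; for \eqref{def:sffracsus}, $g^{\smf}_\eta=\tfrac{\eta}{2\Gamma(1-\eta)}\int_{\Omega-t_0}\frac{(\L_{t_0+t}-\L_{t_0})\rho_{t_0}}{|t|^{1+\eta}}\,\text{sgn}(t)\,dt$. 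By duality of the Ruelle transfer operator, $\int_I(\phi\circ f^j_{t_0})\,g_\eta\,dx=\int_I\phi\,\L^j_{t_0}g_\eta\,dx$, so it suffices to prove $\|\L^j_{t_0}g_\eta\|_{L^1(I)}\le C\theta^j$.

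For this I would establish two properties of each source: (i) $g_\eta\in H^s_p(I)$ for suitable $0\le s<\tfrac12-\eta$ and $1<p<\tfrac{1}{1/2+\eta+s}$; and (ii) $\int_I g_\eta\,dx=0$ (the component of $g_\eta$ along $\rho_{t_0}$ vanishes). Property (i) for $g^{\rsp}_\eta$ is exactly Theorem \ref{thm:reginvdens}. For $g^{\fr}_\eta$ and $g^{\smf}_\eta$ the first step is to describe the curve $t\mapsto\L_t\rho_{t_0}$ near $t_0$: by \eqref{eq:gensetting} it is a real-analytic deformation of $\rho_{t_0}$ — for the quadratic family one even has the exact identity $\L_{t_0+t}\rho_{t_0}=\rho_{t_0}(\cdot-t)$ for $|t|$ small — so Lemma \ref{lemma:regshiftop} yields $\|\L_{t_0+t}\rho_{t_0}-\rho_{t_0}\|_{H^{\tilde s}_p}\le C|t|^{s-\tilde s}$ with $s-\tilde s$ chosen in $(\eta,\tfrac12)$. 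This makes both the $t$-Marchaud derivative and the $\Omega$-integral converge in $H^{\tilde s}_p(I)$, and the same description reduces the regularity of $g^{\fr}_\eta$, $g^{\smf}_\eta$ to that of $M^\eta[\rho_{t_0}]$, i.e. again to Theorem \ref{thm:reginvdens}. For (ii): if $t_{\min},t_{\max}$ are chosen small enough that the deformed support of $\rho_{t_0}$ remains inside $I_{t_0}$, mass conservation gives $\int_I\L_t\rho_{t_0}\,dx\equiv\int_I\rho_{t_0}\,dx$ for \emph{every} $t\in\R$ (including the frozen range $t\notin(t_{\min},t_{\max})$), so that $\int_I g^{\fr}_\eta\,dx$ and $\int_I g^{\smf}_\eta\,dx$ both vanish, the Marchaud derivative of a constant being zero; the analogous vanishing for $g^{\rsp}_\eta$ is the delicate point addressed below.

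With (i) and (ii) in hand, the geometric bound comes from decay of correlations: since $f_{t_0}$ is mixing and Misiurewicz, hence Collet--Eckmann with exponential decay of correlations, Lemma \ref{lemme:decayofcorrel} provides $\theta\in(0,1)$ and $C>0$ such that $\big|\int_I\phi\,\L^j_{t_0}g\,dx\big|\le C\theta^j\|\phi\|_{L^\infty(I)}\|g\|_{H^s_p}$ for every $g\in H^s_p(I)$ with $\int_I g\,dx=0$. Applying this to $g=g_\eta$ gives $|a_j|\le C\theta^j$ uniformly in $j$, so each of the three series converges absolutely and locally uniformly on $\mathbb D(0,\theta^{-1})$ and defines a holomorphic function there.

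I expect the main obstacle to be the interplay between the \emph{global} character of the Marchaud derivative (and of the $\Omega$-integral) and the \emph{local}, spectral nature of the decay-of-correlations estimate. Turning $g^{\fr}_\eta$ and $g^{\smf}_\eta$ into bona fide $H^s_p$ functions requires uniform-in-$t$ H\"older control of $t\mapsto\L_t\rho_{t_0}$ up to and across the endpoints $t_{\min},t_{\max}$ — this is precisely the ``problem with $t\notin(t_{\min},t_{\max})$'' — and one must establish the vanishing property (ii), which is exactly what prevents a pole at $z=1$ and so yields a radius strictly greater than $1$ rather than equal to $1$; for the response function this amounts to comparing $\Psi^{\rsp}_\phi$ with $\Psi^{\fr}_\phi$ (or to checking directly that the $\rho_{t_0}$-component of $M^\eta[\rho_{t_0}]$ vanishes), and it is the subtlest step. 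A secondary technical point is that $\L_{t_0}$ does not act on the scale $H^s_p$ in an elementary way because of the critical point, so Lemma \ref{lemme:decayofcorrel} has to be applied in a form valid for sources in $H^s_p(I)$, with constants uniform over the spikes $\tilde f_{\pm1/2,c_k,w_i,\sigma}$ appearing in Ruelle's decomposition \eqref{eq:invdensdec}.
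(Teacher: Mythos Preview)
Your proposal is correct and follows essentially the same route as the paper: regularity of the source in $H^s_p$ via Theorem~\ref{thm:reginvdens} (response) or Lemma~\ref{lemma:regshiftop} applied to $t\mapsto\L_{t_0+t}\rho_{t_0}$ (frozen, semifreddo), combined with Lemma~\ref{lemme:decayofcorrel} on a mean-zero source. The only organisational difference is that the paper applies Lemma~\ref{lemme:decayofcorrel} slice-by-slice in $t$ and integrates afterward via Fubini (splitting $t\in[t_{\min},t_{\max}]$ from the frozen tails), rather than first assembling a single $g_\eta$.

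One correction: you flag the vanishing $\int_I M^\eta[\rho_{t_0}]\,dx=0$ as ``the subtlest step,'' but it is immediate. From Definition~\ref{def:Marchaudderiv} the two-sided derivative reads $M^\eta[g](x)=\tfrac{\eta}{2\Gamma(1-\eta)}\int_0^\infty u^{-1-\eta}\bigl(g(x+u)-g(x-u)\bigr)\,du$, and integrating in $x$ kills the numerator by translation invariance of Lebesgue measure; the paper passes over this without comment. The genuine work is exactly where you place it otherwise: the H\"older bound $\|(\L_{t_0+t}-\L_{t_0})\rho_{t_0}\|_{H^{\tilde s}_p}\le C|t|^{s-\tilde s}$ with $s-\tilde s>\eta$, and the separate treatment of the tails $t\notin(t_{\min},t_{\max})$.
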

	Our starting point in this study is the following lemma:
	\begin{lemma}\label{lemme:decayofcorrel}
		For a mixing Misiurewicz real-analytic unimodal map $f_{t_0}$ as in Remark \ref{rem:gensetting}, a $L^\infty(I)$ observable $\phi$ and a $H^s_p(I)$ observable $\psi$ $(s>0, p>1)$, there exists $C=C(t_0,s,p)$ and $0<\theta<1$ such that
		\[\left|\int_I\phi\L_{t_0}^j(\psi)dx-\int_I\phi\rho_{t_0}dx\int_I\psi dx\right|\leq C\theta^j\|\phi\|_{L^\infty}\|\psi\|_{H^s_p}. \]
	\end{lemma}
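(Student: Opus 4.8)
The plan is to reduce the statement to a known exponential decay-of-correlations estimate for the transfer operator $\L_{t_0}$ acting on a fixed Banach space of functions of bounded variation (or a suitable fractional Sobolev space), and then to upgrade from $BV$-observables to $H^s_p(I)$-observables by an interpolation/embedding argument. First I would recall that, since $f_{t_0}$ is a mixing Misiurewicz (hence Collet–Eckmann) real-analytic unimodal map with negative Schwarzian derivative, the transfer operator $\L_{t_0}$ acts on $BV(I)$ with a spectral gap: the leading eigenvalue $1$ is simple with eigenfunction $\rho_{t_0}$, and the rest of the spectrum lies in a disk of radius $\theta_0<1$. This is the classical Lasota–Yorke / Hofbauer–Keller theory in the Misiurewicz case (one may also invoke the tower constructions of Keller–Nowicki or Young). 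Consequently, writing $\Pi(\psi):=\rho_{t_0}\int_I\psi\,dx$ for the spectral projection onto the top eigenspace, one has for some $\theta_0<\theta<1$
\[
\bigl\|\L_{t_0}^j(\psi)-\Pi(\psi)\bigr\|_{BV}\le C\,\theta^j\,\|\psi\|_{BV},
\]
and pairing against $\phi\in L^\infty(I)$ (using $|\int_I \phi\, g\,dx|\le \|\phi\|_{L^\infty}\|g\|_{L^1}\le \|\phi\|_{L^\infty}\|g\|_{BV}$) gives exactly the claimed inequality for $\psi\in BV$.

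The remaining point, and the one I expect to be the main obstacle, is to replace $\|\psi\|_{BV}$ by $\|\psi\|_{H^s_p(I)}$ for arbitrary $s>0$, $p>1$. For $s\ge 1$ and $p$ large this is immediate since $H^s_p(I)\hookrightarrow BV(I)$ on a bounded interval; but the lemma is stated for \emph{all} $s>0$ and $p>1$, and for small $s$ there is no continuous embedding $H^s_p(I)\hookrightarrow BV(I)$. To handle this I would not insist on the $BV$ formulation but instead use that $\L_{t_0}$ also has a spectral gap on a scale of spaces adapted to $f_{t_0}$ that contains $H^s_p(I)$ — for instance the fractional Sobolev / Besov-type spaces used by Baladi–Smania and, in the real-analytic Misiurewicz setting, the anisotropic or generalized bounded variation spaces of Keller. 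Alternatively, and more in the spirit of the present paper, I would argue by a two-step estimate: split $\psi=\Pi(\psi)+\psi_0$ with $\int_I\psi_0=0$; apply one step of $\L_{t_0}$, using a Lasota–Yorke inequality of the form $\|\L_{t_0}\psi\|_{BV}\le C\|\psi\|_{H^s_p}$ valid because $\L_{t_0}$ regularizes (the branches of $f_{t_0}^{-1}$ are smooth away from the critical value and the square-root singularities are exactly of the type $\tilde f_{\pm 1/2,a,A,\sigma}$ analyzed in Lemma \ref{lemme:regofspike}); and then run the $BV$ decay estimate on $\L_{t_0}\psi_0$. This yields
\[
\Bigl|\int_I\phi\,\L_{t_0}^j\psi\,dx-\int_I\phi\,\rho_{t_0}\,dx\int_I\psi\,dx\Bigr|
=\Bigl|\int_I\phi\,\bigl(\L_{t_0}^{j-1}(\L_{t_0}\psi_0)\bigr)dx\Bigr|
\le C\,\theta^{j-1}\,\|\phi\|_{L^\infty}\,\|\L_{t_0}\psi_0\|_{BV}\le C'\theta^j\|\phi\|_{L^\infty}\|\psi\|_{H^s_p},
\]
after absorbing the $\theta^{-1}$ and the projection norm into the constant.

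Thus the structure of the argument is: (i) invoke the spectral gap of $\L_{t_0}$ on $BV(I)$ in the Misiurewicz case; (ii) establish the one-step smoothing estimate $\|\L_{t_0}\psi\|_{BV}\le C\|\psi\|_{H^s_p}$, controlling the critical-value square-root singularity via the computations of Section 3; (iii) combine the two with the trivial pairing $|\int\phi\,g|\le\|\phi\|_{L^\infty}\|g\|_{BV}$ and rename constants. The delicate step is (ii): one must check that pushing a $H^s_p$ function forward under the transfer operator lands in $BV$ with a quantitative bound, which is where the branch structure of $f_{t_0}$ and the Misiurewicz (non-recurrence of the critical orbit) hypothesis enter. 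I would flag that Viviane Baladi suggested the proof of this lemma, so a direct reference to the relevant spectral-gap statement in \cite{Baladibook} together with the elementary smoothing estimate may well be the cleanest route.
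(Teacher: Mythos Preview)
Your proposal has a genuine gap at both steps (i) and (ii). The transfer operator $\L_{t_0}$ of a smooth unimodal map with a quadratic critical point does \emph{not} act boundedly on $BV(I)$, let alone have a spectral gap there: the inverse branches behave like $x\mapsto \pm\sqrt{c_1-x}$, so
\[
\L_{t_0}\psi(x)=\frac{\psi(\sqrt{c_1-x})+\psi(-\sqrt{c_1-x})}{2\sqrt{c_1-x}}
\]
carries a $1/\sqrt{c_1-x}$ singularity even for $\psi\in C^\infty$. Hence $\L_{t_0}\psi\notin BV$ in general, which simultaneously kills your spectral-gap input on $BV$ and your ``one-step smoothing'' estimate $\|\L_{t_0}\psi\|_{BV}\le C\|\psi\|_{H^s_p}$. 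The Lasota--Yorke/Hofbauer--Keller theory you invoke applies to piecewise \emph{expanding} maps; for Misiurewicz unimodal maps one must pass to a tower (Keller--Nowicki, Young, or the Baladi--Smania construction), and the spectral gap then lives on a Banach space $\mathcal B$ on the tower, not on $BV(I)$.

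The paper's argument proceeds differently and avoids this obstruction. It uses the tower from \cite{BS12} to obtain, for $\psi\in C^1$ supported in $I$ (which lifts trivially to the ground floor of the tower with $\|\hat\psi\|_{\mathcal B}=\|\psi\|_{W^{1,1}}$), the estimate
\[
\Bigl|\int_I\phi\,\L_{t_0}^j\psi\,dx-\int_I\phi\rho_{t_0}\,dx\int_I\psi\,dx\Bigr|\le C\kappa^j\|\phi\|_{L^\infty}\|\psi\|_{W^{1,1}}.
\]
The passage from $C^1$ to $H^s_p$ is then done not by smoothing under $\L_{t_0}$ but by a mollification argument with a $j$-dependent scale: approximate $\psi\in H^s_p$ by $\psi_\epsilon\in C^1$ with $\|\psi_\epsilon\|_{C^1}\lesssim \epsilon^{-2}\|\psi\|_{H^s_p}$ and $\|\psi-\psi_\epsilon\|_{L^p}\lesssim \epsilon^{s}\|\psi\|_{H^s_p}$, apply the $C^1$ estimate to $\psi_\epsilon$ and the trivial $L^\infty$--$L^p$ bound to the remainder, and optimise $\epsilon=\kappa^{j/(s+2)}$ so that both contributions are $\theta^j:=\kappa^{js/(s+2)}$. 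This is the step that produces the $s$-dependent rate $\theta$ (and the remark that $\theta\to 1$ as $\eta\to 1/2$); your route, even if the $BV$ issue were repaired by switching to a correct pivot space, would not naturally yield this dependence.
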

	\begin{proof}
		It is shown in \cite[Proposition 4.10 and 4.11]{BS12} that for a Misiurewicz parameter\footnote{In fact, the results of \cite{BS12} hold in the more general setting of Topologically Slow Recurrent (TSR) parameters. Those are measure-theoretic generic among stochastic parameters \cite[Rem 2.3]{BS12}. We refer to \cite[Eq(5),Prop. 2.2]{BS12} for a definition and characterization of TSR parameters.} $t_0$, there is a tower extension $\hat f_{t_0}:\hat I\circlearrowleft$ with associated transfer operator $\hat\L_{t_0}$ such that there exists a Banach space $\mathcal B$, a fixed point $\hat\rho_{t_0}\in\mathcal B$ and a measure $\nu$, constants $C>0$ and $\kappa<1$ satisfying: for any $\hat\psi\in\mathcal B$
		\begin{equation*}
			\left\|\hat\L_{t_0}^n(\hat\psi)-\hat\rho_{t_0}\int_{\hat I}\hat\psi d\nu \right\|_{\mathcal B}\le C\kappa^n\|\hat\psi\|_{\mathcal B}.
		\end{equation*}
		Furthermore, there is a bounded operator $\Pi:\mathcal B\to L^1(I)$, such that $\Pi\circ\hat\L_{t_0}=\L_{t_0}\circ\Pi$ (see \cite[Definition 4.5 and p.34]{BS12}). It follows that for $\psi\in C^1$, supported in $I$, we may construct $\hat\psi=(\psi,0,\dots,0,\dots)\in \mathcal B$, that trivially satisfies $\|\hat\psi\|_{\mathcal B}=\|\psi\|_{W^{1,1}}$, $\int_{\hat I}\hat\psi d\nu=\int_I\psi dx$ and $\Pi(\hat\psi)=\psi$. Then one has
		\begin{equation*}
			\left\|\L_{t_0}^n\psi-\rho_{t_0}\int_I\psi dx \right\|_{L^1}\le C\kappa^n \|\psi\|_{W^{1,1}},
		\end{equation*}
		for any $\psi\in C^1$ supported in I. By duality we have, for any $\phi\in L^\infty(I)$
		\begin{equation}
			\left|\int\phi\L_{t_0}^n\psi dx-\int\phi\rho_{t_0}dx\int\psi dx \right|\leq C\kappa^n\|\phi\|_{L^\infty}\|\psi\|_{W^{1,1}}.
		\end{equation}
		To extend this estimate to $\psi\in H^s_p(I)$, $s>0,~p>1$, we notice that since $p>1$, the Sobolev embeddings imply that, for any $\tilde s >2$ (we may choose $\tilde s<2+s$), there exists $\tilde C$ such that for any compactly supported $g\in H^{\tilde s}_p$
		\[\|g\|_{C^1} \le \tilde C  \|g\|_{H^{\tilde s}_p} \, .\]
		Since $s>0$, using mollification, we can approach $\psi$ by $C^1$ functions
		$\psi_\epsilon$ with
		\[
		\|\psi_\epsilon\|_{C^1} \le \tilde C  \| \psi_\epsilon\|_{H^{\tilde s}_p}
		\le  \tilde C_0\|\psi\|_{H^s_p}\epsilon^{-2}\, ,
		\quad
		\|  \psi-\psi_\epsilon\|_{L^p}
		\le \tilde C_1 \epsilon^{s} \|\psi\|_{H^s_p}\, ,
		\]
		for every $\epsilon>0$.
		\\Note that, still by Sobolev embedding, $\|\psi_\epsilon\|_{W^{1,1}}\leq C\|\psi_\epsilon\|_{C^1}$.
		\\The lemma then follows from the following facts.
		\\First,
		$$\int \phi(x) \L_{t_0}^j(\psi_\epsilon) (x)\, dx=
		\int (\phi \circ f_{t_0}^j) \psi_\epsilon \, dx
		\, .
		$$
		Second,
		\[	 \left. 
		\begin{aligned}
			&\left|\int \phi \, d\mu_{t_0} \int (\psi_\epsilon-\psi) \, dx\right|\\
			&\left|\int (\phi \circ f_{t_0}^j) (\psi -\psi_\epsilon )\, dx\right|
		\end{aligned}
		\right\}\le
		\sup|\phi| \| \psi-\psi_\epsilon \|_{L^p}.\]
		To conclude, for each $j$, choose $\epsilon=\kappa^{j/(s+2)}$, so that
		\begin{equation}\label{eq:radius}
			\frac{\kappa^j}{\epsilon^2} =\epsilon^s = \kappa^{js/(s+2)}=: \theta^j\, .
		\end{equation}
	\end{proof}
	\begin{remark}
		Looking at the regularity obtained for the Marchaud derivative in Theorem \ref{thm:reginvdens} and \eqref{eq:radius} we see that 
		\[\kappa^{\frac{1-2\eta}{5-2\eta}}<\theta\le 1. \]
		In particular, when $\eta\to\frac{1}{2}$, we get $\theta\to 1$.
	\end{remark}
	\paragraph{Proof of Theorem \ref{thm:regfrozenfracsus}.}
	Applying Lemma \ref{lemme:decayofcorrel} to $\psi= \mathds{1}_{I}M^\eta(\rho_{t_0})\in H^s_p(I)$ for $0\le\eta<1/2$, we obtain easily that
	$\Psi^{\rsp}_\phi(\eta,.)$ is holomorphic in a disk $\mathbb D(0,\theta^{-1})$.
	To extend this result to the frozen fractional susceptibility function \eqref{def:frozenfracsus}, one may proceed as in \cite[Prop 2.6]{BS20}, showing that the response and frozen susceptibilities differ by a function that is holomorphic in a disk of radius strictly greater than one. 
	\\Instead, we will rely on another method, closer to the one presented in \cite{ABLP20}, which apply to both the frozen and semifreddo fractional susceptibilities. Furthermore, it allows to treat the more general setting \eqref{eq:gensetting} described at the end of the Introduction. 
	\\ We remark that the relation $f_{t_0+t}=f_{t_0}+tX_{t_0}\circ f_{t_0}$ implies that for any $H^s_p(I)$ $(s>0, p>1)$ observable $g$, any $t\in[t_{min},t_{max}]$, any $x\in I$ we have 
	\[(\L_{t_0}g)(x)=(\L_{t_0+t}g)(x+tX_{t_0})\left(1+tX'_{t_0}(x+tX_{t_0})\right).\]
	Note that in the case of the quadratic family \eqref{def:quadfamily}, this last equality simply reads $\L_{t_0}g(x)=\L_{t_0+t}g(x+t)$.
	Up to reducing the interval $[t_{min},t_{max}]$, we may assume that $Id+tX_{t_0}$ is a $C^1$ diffeomorphism.
	\\One then has:
	\begin{align*}
		[\L_{t_0+t}-\L_{t_0}]\rho_{t_0}&=\frac{1}{1+tX'_{t_0}}\rho_{t_0}\circ(Id+tX_{t_0})^{-1}-\rho_{t_0}\\
		&=\frac{1}{1+tX'_{t_0}}\left[\rho_{t_0}\circ(Id+tX_{t_0})^{-1}-\rho_{t_0}-tX'_{t_0}\rho_{t_0}\right].
	\end{align*}
	By Theorem \ref{thm:reginvdens}, $\rho_{t_0}\in H^{s}_p$ for $0\le s<\frac{1}{2}$ and $1<p<\frac{1}{1/2+s}$, thus, by Lemma \ref{lemma:regshiftop}, it follows easily that
	\begin{equation}\label{eq:regshiftop}
		\|[\L_{t_0+t}-\L_{t_0}]\rho_{t_0}\|_{H^{\tilde s}_p}\le C|t|^{s-\tilde s}\|\rho_{t_0}\|_{H^s_p},
	\end{equation}
	with $C$ independent on $t$.
	\\For $0<s<\frac{1}{2}$, $1<p<\frac{1}{1/2+s}$, fix $0<\tilde s<s$ such that $0<\eta<s-\tilde s$.
	Applying Lemma \ref{lemme:decayofcorrel} to \[\psi=\mathds{1}_I\frac{[\L_{t_0+t}-\L_{t_0}]\rho_{t_0}}{|t|^{1+\eta}},\] which is in $H^s_p(I)$ for $0<s<\frac{1}{2}$, $1<p<\frac{1}{1/2+s}$, and using \eqref{eq:regshiftop}, we get (note that $\int_I\psi dx=0$)
	\begin{align}\label{eq:goodbound}
		\left|\int_I\phi\L_{t_0}^j\left(\frac{[\L_{t_0+t}-\L_{t_0}]\rho_{t_0}}{|t|^{1+\eta}}\right)dx\right|
		&\leq C\theta^j|t|^{s-\tilde s-1-\eta} \|\phi\|_{L^\infty}\|\rho_{t_0}\|_{H^s_p}
	\end{align}
	
	For our choice of $s,\tilde s$, one may integrate this last bound for $t\in[t_{min},t_{max}]$, to obtain
	\begin{align*}
		\left|\int_{t_{min}}^{t_{max}}\int_I\phi\L_{t_0}^j\left(\frac{[\L_{t_0+t}-\L_{t_0}]\rho_{t_0}}{|t|^{1+\eta}}\right)dxdt\right|&\leq C\theta^j \|\phi\|_{L^\infty}\|\rho_{t_0}\|_{H^s_p}
	\end{align*}
	
	For $t>t_{max}$ (the case $t<t_{min}$ is similar), one gets
	\begin{align*}
		&\left|\int_{t>t_{max}}\int_I\phi\L_{t_0}^j\left(\frac{[\L_{t_0+t}-\L_{t_0}]\rho_{t_0}}{|t|^{1+\eta}}\right)dxdt\right|\\
		&\leq\int_{t>t_{max}}\frac{1}{t^{1+\eta}}dt\left|\int_I\phi\L_{t_0}^j(\L_{t_0+t_{max}}-\L_{t_0})\rho_{t_0}dx\right|\\
		&\leq C\theta^j\eta^{-1}t_{max}^{-\eta}\|\phi\|_{L^\infty}\|\rho_{t_0}\|_{H^s_p}
	\end{align*}
	Hence, by Fubini, the last two bounds implies that for any fixed $0<\eta<\frac{1}{2}$ and $\phi\in L^\infty(I)$, the formal series defining $\Psi^{\fr}_\phi(\eta,.)$ converges on a disk of radius $\theta^{-1}>1$.
	
	For the semifreddo susceptibility function \eqref{def:sffracsus}, one may integrate \eqref{eq:goodbound} for $t\in\Omega\cap[t_{min},t_{max}]$, to obtain
	\begin{align*}
		&\left|\int_{\Omega\cap[t_{min},t_{max}]}\int_I\phi\L_{t_0}^j\left(\frac{[\L_{t_0+t}-\L_{t_0}]\rho_{t_0}}{|t|^{1+\eta}}\right)dxdt\right|\\
		&\le C\theta^j\|\phi\|_{L^\infty}\|\rho_{t_0}\|_{H^s_p}\int_{t_{min}}^{t_{max}}\mathds{1}_{\Omega}|t|^{s-\tilde s-1-\eta}dt\\
		&\le C\theta^j\|\phi\|_{L^\infty}\|\rho_{t_0}\|_{H^s_p}\int_{t_{min}}^{t_{max}}|t|^{s-\tilde s-1-\eta}dt
	\end{align*}
	and thus we may conclude as in the previous case. For $t>t_{max}$, resp. $t<t_{min}$, we proceed similarly.
	$\hfill\square$
	\begin{small}
		\bibliography{biblio}
		\bibliographystyle{plain}
	\end{small}
\end{document}